\documentclass[11pt]{amsart}
\usepackage{amsmath,amsfonts,amsthm,amssymb,amsthm,graphicx,mathtools,amscd,enumerate,paralist,bm,hyperref}

\usepackage[mathscr]{eucal}
\usepackage{graphics,color,epsfig}
\usepackage[nocompress]{cite}

\DeclareSymbolFont{iwonaletters}{OML}{iwona}{m}{it}
\theoremstyle{plain}
\newtheorem{theorem}{Theorem}[section]
\newtheorem{lemma}[theorem]{Lemma}

\newtheorem{assumption}[theorem]{Assumption}

\newtheorem{remark}[theorem]{Remark}

\newcommand{\la}{\lambda}
\newcommand{\eps}{\varepsilon}

\newcommand{\al}{\alpha}

\newcommand{\gam}{\gamma}
\newcommand{\kap}{\kappa}
\newcommand{\sig}{\sigma}
\newcommand{\del}{\delta}

\newcommand{\Om}{\mathnormal{\Omega}}

\newcommand{\N}{{\mathbb N}}

\newcommand{\R}{{\mathbb R}}

\newcommand{\Z}{{\mathbb Z}}

\newcommand{\E}{{\mathbb E}}

\newcommand{\PP}{{\mathbb P}}

\newcommand{\calC}{{\mathcal C}}
\newcommand{\calD}{{\mathcal D}}

\newcommand{\calF}{{\mathcal F}}

\newcommand{\calL}{{\mathcal L}}

\newcommand{\calP}{{\mathcal P}}

\newcommand{\gW}{\grave{W}}
\newcommand{\gPi}{\grave{\Pi}}

\renewcommand{\proof}{\noindent{\bf Proof.\ }}

\newcommand{\lan}{\langle}
\newcommand{\ran}{\rangle}

\newcommand{\oo}{\overline}

\newcommand{\w}{\wedge}

\newcommand{\To}{\Rightarrow}

\newcommand{\iy}{\infty}

\newcommand{\cadlag}{c\`adl\`ag }

\newcommand{\noi}{\noindent}

\newcommand{\rs}{{\rm rs}}
\newcommand{\cs}{{\rm cs}}

\begin{document}

\title[]{
Mean field limits of large Jackson networks
\\
in heavy traffic
}

\author{Rami Atar}
\address{Viterbi Faculty of Electrical \& Computer Engineering
\\
Technion
} 
\email{rami@technion.ac.il}

%\subjclass[2010]{---}
%\keywords{---}

\date{\today}

\begin{abstract}
We consider an open Jackson network with $n$ exchangeable single-server stations and weak all-to-all interaction through routing: upon service completion at station $i$, a job is routed to station $j$ with probability $p/n$, where $p\in(0,1)$, or leaves the system with probability $q=1-p$. We study a joint asymptotic regime in which the number of stations tends to infinity while the system approaches heavy traffic. Under the critical-load condition and diffusive scaling of time and queue length, we prove propagation of chaos for the queue-length and cumulative-idleness processes. The limiting McKean--Vlasov dynamics are described by the nonlinear reflected Brownian motion
\[
\oo X(t)=\oo X_0+\oo W(t)+\hat\gamma t+\oo L(t)-p\,\E \oo L(t),
\]
where $\oo W$ is a Brownian motion with variance parameter $2$ and $\oo L$ is the reflection term at the origin. The proof proceeds by coupling the rescaled queueing network to a Brownian particle system interacting through boundary local times. A key step is a decoupling construction that replaces the correlated routing noise in the prelimit by asymptotically equivalent independent driving noises.
\end{abstract}

\subjclass[2020]{60K25, 60K35, 60J60, 60H10}

\keywords{Jackson networks; heavy traffic; McKean--Vlasov limits; propagation of chaos; reflected Brownian motion; mean-field routing}

\maketitle

\section{Introduction}

This paper studies a Jackson network with exchangeable nodes under a joint McKean--Vlasov and heavy-traffic scaling. The McKean--Vlasov, or many-station, limit corresponds to letting the number of nodes tend to infinity, whereas the diffusion limit corresponds to imposing a critical-load condition and applying diffusive scaling to the queue lengths. We show that, under this joint scaling, the diffusively scaled queue-length and idleness processes satisfy a propagation-of-chaos property and converge to a nonlinear reflected Brownian motion.

\subsection{Network structure and scaling}

The model consists of $n$ single-server queueing stations. Server $i$ is dedicated to queue $i$ and serves the job at the head of the line whenever the queue is nonempty. The queue length at station $i$ is the number of jobs in queue $i$, including the job in service, if any. The queue-length process is the $n$-dimensional process whose $i$th coordinate is the queue length at station $i$. Exogenous arrivals to each station are Poisson, and service times are exponential. Upon completion of service at station $i$, a job is routed to station $j$ with probability $p/n$, or leaves the system with probability $q$, where $p>0$, $q>0$, and $p+q=1$. Self-routing is allowed.

On a probability space $(\Om,\calF,\PP)$, we are given, for each $n$, an initial condition
$X^n_0=(X^n_{0i})_{i\in[n]}$, taking values in $\Z_+^n$, and a collection of Poisson processes
$(\Pi^n_{ij})_{(i,j)\in J^n}$, where
\[
J^n=(\{0\}\times[n])\cup[n]^2\cup([n]\times\{0\}).
\]
For $i\in[n]$, the process $\Pi^n_{0i}$ has intensity $\gamma_n$ and represents the stream of exogenous arrivals to station $i$. We assume that $\gamma_n$ is asymptotic to $\gamma+\hat\gamma n^{-1/2}$, where $\gamma>0$ and $\hat\gamma\in\R$; that is,
\[
\lim_{n\to\infty}\hat\gamma_n=\hat\gamma,
\qquad
\hat\gamma_n=n^{1/2}(\gamma_n-\gamma).
\]
For $i\in[n]$, the process $\Pi^n_{ij}$ has rate $p/n$ for $j\in[n]$, including the case $j=i$, and has rate $q$ for $j=0$. These processes represent potential service completions at station $i$, followed either by routing to station $j\in[n]$ or by departure from the system when $j=0$. Such a potential completion is counted as an actual completion only if queue $i$ is nonempty at the event time. The Poisson processes are assumed to be mutually independent and independent of $X^n_0$. For each $i\in[n]$, the total rate of the potential service-completion processes $\Pi^n_{ij}$, $j=0,1,\ldots,n$, is $p+q=1$, corresponding to a unit service rate at each station.

Let $X^n=(X^n_i)_{i\in[n]}$ be the queue-length process, modeled as the unique solution of
\begin{equation}\label{q1}
\begin{split}
X^n_i(t)
&=X^n_{0i}+\Pi^n_{0i}(t)
-\sum_{j=0}^n\int_0^t B^n_i(s-)\,d\Pi^n_{ij}(s)
+\sum_{j=1}^n \int_0^t B^n_j(s-)\,d\Pi^n_{ji}(s),
\qquad i\in[n],
\\
B^n_i(t)&=1_{\{X^n_i(t)>0\}}, \qquad i\in[n].
\end{split}
\end{equation}
The first two terms on the right are the initial condition and the exogenous arrivals to station $i$. The third term counts actual service completions from station $i$, including both routed jobs and departures from the network, while the fourth term counts jobs routed into station $i$ from other stations. The busyness indicators $B^n_i$ ensure that potential service completions are counted only when the corresponding queue is nonempty.
Define also the idleness indicator $I^n_i=1-B^n_i$ and the cumulative idleness process
\[
L^n_i(t)=\int_0^t I^n_i(s)\,ds.
\]
%Thus \eqref{q1} may be viewed as the balance equation
%\[
%X^n_i=X^n_{0i}+\Pi^n_{0i}-D^n_i+A^n_i,
%\qquad i\in[n],
%\]
%where $D^n_i$ and $A^n_i$ denote, respectively, the cumulative departures from station $i$ and the cumulative internal arrivals to station $i$.

To obtain a critically loaded system, we impose the heavy-traffic condition
\begin{equation}\label{e-ht}
\gamma=q.
\end{equation}
We study the diffusively scaled queue-length and cumulative-idleness processes
\begin{equation}\label{x3}
\hat X^n_i(t)=n^{-1/2}X^n_i(nt),
\qquad
\hat L^n_i(t)=n^{-1/2}L^n_i(nt),
\qquad i\in[n].
\end{equation}
Finally, define the empirical measure of the scaled queue-length/idleness pairs and the empirical mean of the scaled cumulative idleness by
\begin{equation}\label{45}
\mu^n_t=\frac{1}{n}\sum_{i\in[n]}\delta_{(\hat X^n_i(t),\hat L^n_i(t))},
\qquad
\lambda^n(t)=\frac{1}{n}\sum_{i\in[n]}\hat L^n_i(t).
\end{equation}

\subsection{The limiting McKean--Vlasov process}

A one-dimensional Brownian motion with drift $b$ and variance $\sigma^2$ will be referred to as a $(b,\sigma^2)$-BM. The McKean--Vlasov process relevant to our setting, also referred to as the {\it nonlinear reflected Brownian motion} (RBM), is given by
\begin{equation}\label{MVSDE}
\begin{split}
\oo X(t) &= \oo X_0+\oo W(t)+\hat\gamma t+\oo L(t)-p\,\lambda(t),
\\
\lambda(t) &= \E \oo L(t).
\end{split}
\end{equation}
Here, $\oo W$ is a $(0,2)$-BM independent of the initial condition $\oo X_0$, and $\oo L$ is the reflection term at $0$. Thus $\oo X$ takes values in $\R_+:=[0,\iy)$, $\oo L$ is nondecreasing, $\oo L(0)=0$, and $\oo L$ increases only when $\oo X=0$. By \cite[Theorem 2.4]{atar26}, \eqref{MVSDE} admits a pathwise unique strong solution, and the processes $\oo X$ and $\oo L$ have continuous paths.

The form of \eqref{MVSDE} is natural in view of classical heavy-traffic theory for Jackson networks. Indeed, a classical result of Reiman \cite{reiman1984open} shows that, when $n$ is fixed, the diffusion-scaled queue-length process converges in heavy traffic to a semimartingale RBM in $\R_+^n$, with reflection matrix determined by the routing probabilities. In the present model, routing probabilities are of order $1/n$, and therefore the corresponding reflection interaction is weak at the level of each pair of stations but is spread over all stations. The nonlinear RBM \eqref{MVSDE} is the mean-field analogue of this fixed-dimensional reflected Brownian approximation.

The proof uses recent results about Brownian particle systems on the half-line. Let $(\tilde X^n,\tilde L^n)$ be the solution of
\begin{equation}\label{x44}
\tilde X^n_i(t)
=
\tilde X^n_{0i}
+\tilde W_i(t)
+\hat\gamma t
+\tilde L^n_i(t)
-\frac{p}{n}\sum_{j\in[n]}\tilde L^n_j(t),
\qquad i\in[n],
\end{equation}
where $(\tilde W_i)_{i\in[n]}$ are i.i.d.\ $(0,2)$-BMs and the processes $\tilde L^n_i$ are the corresponding reflection terms. The results of both \cite{atar26} and \cite{baker2025particle} show that this Brownian system undergoes propagation of chaos, with \eqref{MVSDE} as its McKean--Vlasov limit
(a minor normalization difference is addressed in Section \ref{sec23}). Our main task is to show that, under the joint heavy-traffic and many-station scaling, the queueing network can be coupled to this Brownian particle system with an error that vanishes as $n\to\infty$.

The connection with \eqref{x44}, however, is not automatic from the fixed-dimensional heavy-traffic theorem. The semimartingale RBM describing the fixed-dimensional limit is driven by a Brownian motion with dependent components, whereas \eqref{x44} is driven by i.i.d.\ BMs. One of the main steps of the proof is to overcome this dependence by an asymptotically negligible perturbation of the prelimit driving noise. The mechanism behind this perturbation is described after Theorem \ref{th1} and carried out in Lemma \ref{lem21}.

\subsection{Main result}

Some notation needed to present the main result is as follows.
For a Polish space $E$, let $C(\R_+,E)$ be the space of continuous paths $\R_+\to E$, equipped with the local uniform topology, and let $D(\R_+,E)$ be the space of \cadlag paths $\R_+\to E$, equipped with the $J_1$ topology. In particular, for $n\in\N$ let
\[
C^{(n)}=C(\R_+,\R^n),
\qquad
D^{(n)}=D(\R_+,\R^n).
\]
Let $\calP(E)$ denote the set of probability measures on $E$, equipped with the topology of weak convergence. Write $\To$ for convergence in distribution.

The initial conditions are assumed to satisfy the following.

\begin{assumption}\label{assn1}
For each $n$, the initial conditions $(\hat X^n_{0i})_{i\in[n]}$ are exchangeable.
Moreover, there exists $\mu_0^{(1)}\in\calP(\R_+)$ such that
\[
\frac{1}{n}\sum_{i\in[n]}\delta_{\hat X^n_{0i}}\to\mu_0^{(1)}
\]
in $\calP(\R_+)$ in probability.
\end{assumption}

Our main result is the following.

\begin{theorem}\label{th1}
Let Assumption \ref{assn1} hold. Let $k\in\N$, and let $(\oo X,\oo L,\lambda)$ be the solution of \eqref{MVSDE} with $\oo X_0\sim\mu_0^{(1)}$. Then
\[
((\hat X^n_i)_{i\in[k]},(\hat L^n_i)_{i\in[k]})
\To
((\oo X_i)_{i\in[k]},(\oo L_i)_{i\in[k]})
\]
in $D^{(k)}\times C^{(k)}$, where $(\oo X_i,\oo L_i)$ are $k$ independent copies of $(\oo X,\oo L)$. Moreover,
\[
(\mu^n,\lambda^n)\to(\mu,\lambda)
\]
in $D(\R_+,\calP(\R_+^2))\times C^{(1)}$, in probability, where
\[
\mu_t=\PP\circ(\oo X(t),\oo L(t))^{-1},
\qquad t\ge0.
\]
\end{theorem}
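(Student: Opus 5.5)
The plan is to rewrite \eqref{q1} in Skorokhod form and couple it to \eqref{x44}. Center each Poisson process, $\hat\Pi^n_{ij}(t)=\Pi^n_{ij}(t)-\text{rate}\cdot t$, and use $\int_0^t B^n_i(s)ds=t-L^n_i(t)$. The drift terms then cancel except for the idleness contributions. Under \eqref{e-ht}, after scaling this gives
\[
\hat X^n_i(t)=\hat X^n_{0i}+\hat M^n_i(t)+\hat\gamma_n t+\hat L^n_i(t)-\frac{p}{n}\sum_{j\in[n]}\hat L^n_j(t),
\]
where $\hat M^n_i$ is the scaled martingale
\[
n^{-1/2}\Big[\hat\Pi^n_{0i}(nt)-\sum_{j=0}^n\int B^n_i\,d\hat\Pi^n_{ij}+\sum_{j=1}^n\int B^n_j\,d\hat\Pi^n_{ji}\Big](n\cdot).
\]
Since $\hat L^n_i$ increases only when $\hat X^n_i=0$, $(\hat X^n,\hat L^n)$ solves the $n$-dimensional Skorokhod problem with the same reflection matrix $I-\frac pn\mathbf 1\mathbf 1^T$ that appears in \eqref{x44}. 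I will use the Lipschitz property of this Skorokhod map, with a constant uniform in $n$ in the normalized-$\ell^1$ (or $\ell^\infty$) norm because the off-diagonal mass is $p<1$. This should give
\[
\frac1n\sum_i\|\hat X^n_i-\tilde X^n_i\|_T+\frac1n\sum_i\|\hat L^n_i-\tilde L^n_i\|_T\le C\Big(\frac1n\sum_i|\hat X^n_{0i}-\tilde X^n_{0i}|+\frac1n\sum_i\|\hat M^n_i-\tilde W_i\|_T+|\hat\gamma_n-\hat\gamma|T\Big),
\]
and a similar bound per coordinate, for fixed $i$, in terms of $\|\hat M^n_i-\tilde W_i\|_T$ plus the average error.

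The key step, presumably Lemma \ref{lem21}, is to construct on a common space i.i.d.\ $(0,2)$-BMs $\tilde W_i$ such that $\E\|\hat M^n_i-\tilde W_i\|_T\to0$ uniformly in $i$. The quadratic variation of $\hat M^n_i$ is $\gamma t+(t-\hat L^n_i n^{-1/2})+\frac pn\sum_j(t-\cdot)\approx 2t$, using $\gamma+q+p=2$ and that the idleness $n^{-1/2}\hat L^n$ is negligible. The cross-variations $\langle\hat M^n_i,\hat M^n_j\rangle$ are of order $1/n$ for $i\ne j$, but summed over $j$ they are not negligible; this is exactly the dependence noted in the introduction. I would decouple it by splitting $\hat M^n_i=A^n_i+R^n_i$. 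Here $A^n_i$ collects the exogenous arrivals, the departures $\Pi^n_{i0}$, the outgoing routing of $i$ and the incoming routing to $i$, but with each routing stream $\Pi^n_{ji}$ used only on the ``$i$ side'' for a fixed half of the pairs. This gives the main part independent across $i$. $R^n_i$ is the common correction, roughly $p\,n^{-1}\sum_j$ of row-sums of the routing noise, which is $O(n^{-1/2})$ in $L^2$ per coordinate. The main obstacle, which I would attack first, is making this precise: the busy indicators $B^n_i$ depend on everything, so exact independence fails. I would handle it by a time change and martingale representation. Namely, write each scaled compensated Poisson integral through its own independent Brownian motion via Skorokhod/KMT embedding (error $O(n^{-1/2}\log n)$), note that the integrands $B^n$ differ from $1$ only on a set of Lebesgue measure $n^{-1/2}\hat L^n$, and bound the resulting differences by Doob's inequality. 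For the pairwise routing terms I would assign each $\Pi^n_{ij}$ with $i\ne j$ a Gaussian, and define $\tilde W_i$ as the sum of the independent pieces ``owned'' by $i$ plus an independent fresh BM that restores variance $2$. The remaining terms have vanishing second moment because each shared stream contributes variance $p/n$.

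With the coupling established, I would choose $\tilde X^n_{0i}=\hat X^n_{0i}$ and invoke the propagation of chaos for \eqref{x44} from \cite{atar26,baker2025particle}, adjusting the normalization as in Section \ref{sec23}. That result gives convergence of the first $k$ tilde coordinates to i.i.d.\ copies of $(\oo X,\oo L)$, and of $(\tilde\mu^n,\tilde\lambda^n)$ to $(\mu,\lambda)$ in probability; Assumption \ref{assn1} supplies the exchangeability and the initial convergence these results need. The Lipschitz estimate above transfers the convergence to the hat processes in probability for the local uniform metric, and hence in $J_1$. The hat empirical measures then converge because the Wasserstein-1 distance is bounded by the normalized $\ell^1$ error. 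Finally, $\hat L^n$ is continuous, which gives the $C^{(k)}$ and $C^{(1)}$ statements.
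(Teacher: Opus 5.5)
\textbf{Genuine gap: the decoupling of the routing noise.} Your overall architecture matches the paper: Skorokhod form with a uniform-in-$n$ Lipschitz bound obtained by averaging and using $p<1$, coupling to \eqref{x44} with $\tilde X^n_{0i}=\hat X^n_{0i}$, and transferring the propagation of chaos of \cite{atar26} via exchangeability. The step you identify as key, however, does not work as proposed.

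Suppose each routing stream $\Pi^n_{ij}$, $i\ne j$, is ``owned'' by one of the two stations it touches. Then the remainder $R^n_i$ for station $i$ is the sum of all non-owned streams touching $i$. Each such stream has scaled variance $pt/n$, which is what you used. But every stream touches two stations, so on average a station has about $n-1$ non-owned streams. Averaged over $i$, this gives $\E R^n_i(t)^2\approx pt$. That is neither $O(n^{-1/2})$ nor common to all stations.

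Adding an independent fresh BM to restore variance $2$ only makes $\E(\hat M^n_i(t)-\tilde W_i(t))^2\approx 2pt$. You also cannot choose that BM correlated with $R^n_i$ instead: $R^n_i$ is built from streams owned by other stations, so this would destroy independence of the $\tilde W_i$.

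The stream-by-stream Gaussianization fails for a related reason. For $i,j\in[n]$, $\Pi^n_{ij}$ has rate $p/n$, so it makes only $O(1)$ jumps on $[0,nt]$. Any KMT or Skorokhod embedding error is therefore $O(1)$ per stream, i.e.\ $O(n^{-1/2})$ after scaling. Summing about $n$ independent such errors for one station leaves an error of order $1$ in $L^2$. A Brownian approximation becomes accurate only after aggregating into the rate-$pn$ row and column sums, and that aggregation is exactly where the dependence lives.

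\textbf{The missing idea} is the paper's Lemma \ref{lem14}. The row sums $\check\Pi^\rs_i$ and column sums $\check\Pi^\cs_i$ of the routing array depend on each other only through the total count $S=\sum_{i,j}\check\Pi_{ij}$. The reason is that the row and column labels of the successive jumps of $S$ are i.i.d.\ uniform, independent of each other and of $S$.

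The construction keeps the column labels but runs them along an independent rate-$pn^2$ Poisson process $T$. This yields i.i.d.\ rate-$pn$ Poisson processes $\Pi^*_i$, independent of $(\hat\Pi_{0i},\hat\Pi_{i0},\hat\Pi^\rs_i)_i$. Since $\|S-T\|_t\le n^{1+\al/2}$ off an event of probability $O(e^{-cn^\al})$, one gets $\E\|\Pi^*_1-\check\Pi^\cs_1\|_t=O(n^{\al})=o(n^{1/2})$.

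Then $\gW_i=\hat\Pi_{0i}-\hat\Pi_{i0}-\hat\Pi^\rs_i+\gPi_i$ are exactly i.i.d.\ martingales with $[\gW_1]\to 2\iota$. The coupling with i.i.d.\ $(0,2)$-BMs is then done one coordinate at a time, using Skorohod representation for $\gW_1$ plus conditional kernels that preserve the joint law of $(\gW_i,\hat E_i)_i$, not stream by stream.

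\textbf{A smaller gap.} Your treatment of the busyness indicators needs $\sup_n\E\hat L^n_1(t)<\iy$, which you take for granted. The paper obtains it by combining two bounds, where $\hat M_1$ is the idleness-dependent martingale of \eqref{x46}:
\begin{itemize}
\item $\E\|\hat M_1\|_t^2\le cn^{-1/2}\E\hat L_1(t)$;
\item the Skorokhod-map bound $\E\hat L_1(t)\le c+c\E\|\hat M_1\|_t$, which relies on $p<1$.
\end{itemize}
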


The theorem shows that the joint heavy-traffic and many-station limit is described by independent copies of the nonlinear RBM \eqref{MVSDE}, and the empirical distribution of the scaled queue-length/idleness pairs converges to the law of the McKean--Vlasov process. As mentioned above, the proof proceeds by coupling the queueing network to the Brownian particle system \eqref{x44}. Two estimates are needed for this purpose. The first shows that an idleness-dependent martingale term in the queueing dynamics is negligible. The second addresses the dependence in the prelimit driving noise. This dependence comes from the row and column sums of a common Poisson routing array. We construct an independent replacement for the column-sum process, with the same law as the original, whose difference from the original column-sum process is negligible after diffusion scaling. This turns the correlated prelimit driving processes into independent martingales whose limits are independent BMs. These two ingredients reduce the proof to the propagation-of-chaos result for \eqref{x44} established in \cite{atar26}.

\begin{remark}
Unlike standard McKean--Vlasov diffusions, where the interaction acts through the state
variable, the interaction in \eqref{MVSDE} is carried by the reflection process. This
originates from the queueing network: $L_i^n(t)$ is the cumulative idle time of server $i$,
and the routing mechanism couples stations through the empirical average
$\lambda^n(t)$ (this is apparent in equation \eqref{x30}). In the heavy-traffic limit, idleness is encoded by the reflection term,
leading to the interaction term $pE[\oo L(t)]$ in \eqref{MVSDE}.
\end{remark}

\begin{remark}
One common use of diffusion approximations for queueing networks is to study
stationary behavior by first passing to a scaling limit and then sending time to
infinity in the limiting model.
This work is not aimed at deriving stationary approximations.
Indeed, for the model studied here, the stationary distribution of the prelimit
is explicit. If $\hat\gamma_n<0$, or equivalently $\gamma_n<q$, then the traffic
intensity at each station is
\[
\rho_n=\frac{\gamma_n}{q},
\]
and the product-form stationary distribution is given by
\[
\pi^n(x)=\prod_{i\in[n]} (1-\rho_n)\rho_n^{x_i},
\qquad x=(x_1,\ldots,x_n)\in\mathbb Z_+^n.
\]
In particular, when $\hat\gamma_n\to\hat\gamma<0$, the stationary law of
$n^{-1/2}X_i^n$ converges to the exponential distribution with rate
$|\hat\gamma|/q$. Thus the interest here is not in deriving new stationary
distributions, but rather in describing the time-dependent, or transient,
behavior of the network under the scaling.
\end{remark}

\subsection{Related work}

The fixed-dimensional heavy-traffic theory for generalized Jackson networks goes back to the aforementioned work \cite{reiman1984open}, which proves convergence of diffusion-scaled queue-length processes to semimartingale RBM in the orthant. We refer to \cite{chen-yao} for background on RBM in orthants, the associated Skorohod problem, queueing networks, and their diffusion approximations.

McKean--Vlasov limits and propagation of chaos originate with the pioneering work of McKean \cite{mckean1966class} and were developed systematically in \cite{gar88} and \cite{sznitman1991topics}. McKean--Vlasov limits for reflected diffusions were first studied in \cite{sznitman1984nonlinear}, and many extensions have since been considered. In much of this literature, the interaction acts through the empirical law in the drift or diffusion coefficients, whereas in the present setting the interaction arises through the boundary, via the cumulative idleness or local-time terms.

Particle systems with interaction through local times are closer in spirit to the limit appearing here. Such models appear in \cite[Chapter II]{sznitman1991topics}, where particles interact through their collision local times, and in recent works on systems with constraints on the empirical law; see \cite{coghi2022mckean} and references therein.

The propagation-of-chaos input used in the present paper comes from \cite{atar26}, which studies Brownian particle systems on the half-line with interaction through boundary local times and proves the corresponding limit theorem for the reference system \eqref{x44}, and also treats a more general model with random reflection directions. The system \eqref{x44} had previously been studied in \cite{baker2025particle}, where its McKean--Vlasov limit was established; the emphasis there is on regimes in which solutions may exist only locally in time, including cases such as $p \ge 1$.

There is also a substantial body of work on mean-field limits for queueing networks. In this direction, one lets the number of servers, stations, or nodes tend to infinity at the queueing-network level, without applying diffusion scaling to the queue lengths. Examples include \cite{baccelli1992mean,rybko2005poisson2,rybko2016stationary}. These works are distinct from the present heavy-traffic regime and therefore from the Brownian approximation obtained here.

\subsection{Further notation}

For $x\in\R$, let $x^{\pm}=\max(\pm x,0)$. Let $\iota$ denote the identity map on $\R_+$, $\iota(t)=t$. For $n\in\N$, equip $\R^n$ with the Euclidean norm $|\cdot|$.

For a Polish space $(E,d)$ and $G\subset E$, let
\[
C_G(\R_+,E)=\{x\in C(\R_+,E):x(0)\in G\},
\]
and define $D_G(\R_+,E)$ similarly. Let $C_+(\R_+,\R_+)$ denote the set of functions in $C(\R_+,\R_+)$ that are nondecreasing and start at $0$, and define $D_+(\R_+,\R_+)$ similarly.

Let $\calC^{(n)}$ and $\calD^{(n)}$ denote the Borel $\sig$-algebras on $C^{(n)}$ and $D^{(n)}$, respectively.

For $f:\R_+\to\R^n$ and $0\le\delta\le t$, let
\[
\|f\|_t=\sup\{|f(s)|:s\in[0,t]\}
\]
and
\[
w_t(f,\delta)=\sup\{|f(s)-f(u)|:s,u\in[0,t],\, |s-u|\le\delta\}.
\]
Finally, for any $K=(K_i)_{i\in[n]}$, write
\[
\lan K\ran:=\frac{1}{n}\sum_{i\in[n]}K_i.
\]

\subsection{Organization of the paper}

Section \ref{sec2} gives several tools used in the proof. Section \ref{sec21} derives an equation for the diffusively scaled queueing network, Section \ref{sec22} recalls the Skorohod map on the half-line, and Section \ref{sec23} states the propagation-of-chaos result for the Brownian particle system from \cite{atar26}. Section \ref{sec3} proves the main theorem. The proof is based on two ingredients: an estimate showing that an idleness-dependent martingale term is negligible, and a decoupling construction that replaces the correlated routing noise by independent BMs. These ingredients are then combined to prove Theorem \ref{th1}.

\section{Toolbox}\label{sec2}

\subsection{An equation for the rescaled system}\label{sec21}

Denote accelerated processes by
\[
\check\Pi^n_{ij}(t)=\Pi^n_{ij}(nt), \qquad
\check B_i(t)=B_i(nt), \qquad
\check I_i(t)=I_i(nt).
\]
Note that $\check\Pi^n_{ij}$ are Poisson processes with rates $\gamma_nn$, $p$, $qn$.
Let compensated versions be given by
\begin{alignat*}{2}
\hat\Pi_{0i}^n(t)&=n^{-1/2}(\check\Pi_{0i}(t)-\gamma_nnt)
\qquad &&i\in[n]
\\
\hat\Pi_{ij}^n(t)&=n^{-1/2}(\check\Pi_{ij}(t)-pt)
 &&i\in[n],\, j\in[n]
\\
\hat\Pi_{i0}^n(t)&=n^{-1/2}(\check\Pi_{i0}(t)-qnt)
&& i\in[n].
\end{alignat*}
Our goal here is to write the dynamics of $\hat X_i$ in terms of $\hat\Pi_{ij}$, $\check B_i$, and $\hat L_i$ alone. This is achieved in equations \eqref{x30}-\eqref{x31} below.
To this end, note that by \eqref{q1},
\begin{align}\label{k1}
\hat X_i(t)=n^{-1/2}X_i(nt) &=\hat X_{0i}+n^{-1/2}\check\Pi_{0i}(t)-n^{-1/2}\sum_{j=0}^n\int_0^{t} \check B_i(s-)d\check\Pi_{ij}(s)
\\ \notag
&\qquad +n^{-1/2}\sum_{j=1}^n \int_0^{t}\check B_j(s-) d\check\Pi_{ji}(s).
\end{align}
If we let $G_{ij}(t)=n^{-1/2}\int_0^{t}\check B_i(s-)d\check\Pi_{ij}(s)$ then for $i\in[n]$, $j\in[n]$,
\begin{align*}
G_{ij}(t)
&=\hat\Pi_{ij}(t)+pn^{-1/2}t-n^{-1/2}\int_0^{t}\check I_i(s-)d\check\Pi_{ij}(s)
\\
&=\hat\Pi_{ij}(t)+pn^{-1/2}t-\int_0^t\check I_i(s-)d\hat\Pi_{ij}(s)-pn^{-1}\hat L_i(t),
\end{align*}
whereas
\begin{align*}
G_{i0}(t)
&=\hat\Pi_{i0}(t)+qn^{1/2}t-n^{-1/2}\int_0^{t}\check I_i(s-)d\check\Pi_{i0}(s)
\\
&=\hat\Pi_{i0}(t)+qn^{1/2}t-\int_0^t\check I_i(s-)d\hat\Pi_{i0}(s)-q\hat L_i(t).
\end{align*}
Now, the sum of the first two terms on the right side of \eqref{k1} equals
\[
\hat X_{0i}+\hat\Pi_{0i}(t)+\gamma_nn^{1/2}t
=\hat X_{0i}+\hat\Pi_{0i}(t)+\hat\gamma_nt+n^{1/2}\gamma t.
\]
For the remaining terms on the right side of \eqref{k1},
\[
-\sum_{j\in[n]}G_{ij}=-\sum_{j\in[n]}\hat\Pi_{ij}(t)-pn^{1/2}t+\sum_{j\in[n]}\int_0^t\check I_i(s-)d\hat\Pi_{ij}(s)+p\hat L_i(t),
\]
while
\[
\sum_{j\in[n]}G_{ji}=
\sum_{j\in[n]}\hat\Pi_{ji}(t)
+pn^{1/2}t-\sum_{j\in[n]}\int_0^t\check I_j(s-)d\hat\Pi_{ji}(s)-pn^{-1}\sum_{j\in[n]}\hat L_j(t).
\]
Recalling \eqref{e-ht}, by which $\gamma=1-p=q$, this gives
\begin{align}\label{x30}
\hat X_i&=\hat X_{0i}+\hat W_i+\hat\gamma_n\iota+\hat L_i-p\lan\hat L\ran+\hat M_i,
\end{align}
where
\begin{equation}\label{x33}
\hat W_i=\hat\Pi_{0i}-\sum_{j\ge0}\hat\Pi_{ij}+\sum_{j\ge1}\hat\Pi_{ji},
\end{equation}
\begin{equation}\label{x46}
\hat M_i=
\sum_{j\ge0}\hat M_{ij}-\sum_{j\ge1}\hat M_{ji},
\qquad
\hat M_{ij}(t)=\int_0^t\check I_i(s-)d\hat\Pi_{ij}(s).
\end{equation}
Also, by the definition of $\hat L_i$, one has the complementarity condition
\begin{equation}\label{x31}
\int\hat X_id\hat L_i=0.
\end{equation}

\subsection{The Skorohod map}\label{sec22}

The {\it Skorohod problem on the half line} is stated as follows:

{\it Given $w\in D_{\R_+}(\R_+,\R)$, find $x\in D(\R_+,\R_+)$ and $\ell\in D_+(\R_+,\R_+)$ such that $x=w+\ell$ and $\int_{[0,\iy)}xd\ell=0$.
}

An elementary lemma provides its unique solution.
\begin{lemma}\label{lem02}
Given $w\in D_{\R_+}(\R_+,\R)$ there exists a unique solution $(x,\ell)\in D(\R_+,\R_+)\times D_+(\R_+,\R_+)$ to the Skorohod problem on the half line. It is given by
\begin{equation}
\label{x41}
x(t)=w(t)+\sup_{s\le t}w^-(s),
\qquad
\ell(t)=\sup_{s\le t}w^-(s),
\qquad t\ge0.
\end{equation}
\end{lemma}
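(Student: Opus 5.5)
The plan is to prove Lemma \ref{lem02} in two parts: first check that the explicit pair \eqref{x41} solves the Skorohod problem, then show uniqueness by a comparison argument. Throughout, write $\ell(t)=\sup_{s\le t}w^-(s)$ and $x=w+\ell$.

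\emph{Existence.} Since $w(0)\ge0$, we have $w^-(0)=0$, so $\ell(0)=0$. A running supremum is nondecreasing, so $\ell\in D_+(\R_+,\R_+)$. To see that $\ell$ is c\`adl\`ag, note that $w^-$ is c\`adl\`ag. Right-continuity of the running supremum then follows from right-continuity of $w^-$, and left limits exist by monotonicity. Nonnegativity of $x$ holds because $\ell(t)\ge w^-(t)\ge -w(t)$.

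For complementarity, fix a point $t$ of increase of $\ell$. Here I would show that $\ell(t)=w^-(t)$, or, at a jump of $\ell$, that the jump occurs at a time with $w^-(t)=\ell(t)$. In either case $x(t)=w(t)+w^-(t)=w^+(t)$. If $\ell(t)>0$, then $w^-(t)>0$, so $w^+(t)=0$ and hence $x(t)=0$. The measure-theoretic version is: on any interval $[a,b]$ where $x\ge\varepsilon>0$, we have $w^-<\ell(a)$ there, up to right-continuity, so $\ell$ is constant. Together with the atoms of $d\ell$ sitting at times where $x=0$, this gives $\int_{[0,\infty)} x\,d\ell=0$. Because $\ell$ is monotone and right-continuous, this part is routine, though some care is needed at jump times.

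\emph{Uniqueness.} This is the main point. Suppose $(x_1,\ell_1)$ and $(x_2,\ell_2)$ both solve the problem, and set $\delta=x_1-x_2=\ell_1-\ell_2$. This $\delta$ is c\`adl\`ag, of bounded variation on compacts, and satisfies $\delta(0)=0$. I would apply the change-of-variables formula for bounded-variation functions:
\[
\delta(t)^2=2\int_{(0,t]}\delta(s-)\,d\delta(s)-\sum_{s\le t}(\Delta\delta(s))^2\le 2\int_{(0,t]}\delta(s)\,d\delta(s),
\]
where the inequality uses $\delta(s-)=\delta(s)-\Delta\delta(s)$. Expanding,
\[
\int\delta\,d\delta=\int (x_1-x_2)\,d\ell_1-\int(x_1-x_2)\,d\ell_2=-\int x_2\,d\ell_1-\int x_1\,d\ell_2\le0,
\]
using complementarity and the nonnegativity of each $x_i$ and each $d\ell_i$. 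Hence $\delta\equiv0$.

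The step I expect to need the most care is the jump bookkeeping in the existence complementarity argument, namely making sure the atoms of $d\ell$ sit exactly where $x=0$. The uniqueness computation above handles jumps cleanly.
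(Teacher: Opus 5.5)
Your proof is correct. The paper gives no argument of its own for this lemma; it only cites Chen--Yao, Theorem 6.1. What you supply is a self-contained version of the standard proof: explicit verification of the formula for existence, and a quadratic comparison via the Stieltjes change-of-variables formula for uniqueness. The citation buys brevity. Your route shows exactly where complementarity and the monotonicity of $\ell_i$ are used. An equally common alternative for uniqueness shows that any solution has $\ell\ge\sup_{s\le\cdot}w^-$ because $x\ge0$, and the reverse inequality from complementarity. That yields uniqueness and the formula at once without BV calculus.

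\textbf{Change-of-variables identity.} The identity you wrote has the wrong sign on the jump term: a single unit jump at time $1$ turns it into $1=-1$. For c\`adl\`ag $\delta$ of bounded variation with $\delta(0)=0$ the correct identity is
\[
\delta(t)^2=2\int_{(0,t]}\delta(s-)\,d\delta(s)+\sum_{s\le t}(\Delta\delta(s))^2
=2\int_{(0,t]}\delta(s)\,d\delta(s)-\sum_{s\le t}(\Delta\delta(s))^2 .
\]
The second form gives $\delta(t)^2\le 2\int_{(0,t]}\delta(s)\,d\delta(s)$ directly, so your conclusion survives. Note that the integrand must be $\delta(s)$ rather than $\delta(s-)$, because complementarity is stated with $x_i(s)$.

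\textbf{Complementarity in the existence part.} Your claim that $w^-<\ell(a)$ on an interval where $x\ge\varepsilon$ fails when $\ell(a)=0$. The argument is easily made precise:
\begin{enumerate}
\item If $x(t)>0$, right-continuity gives $\varepsilon,h>0$ with $x>\varepsilon$ on $[t,t+h)$, hence $w^-\le(\ell-\varepsilon)^+$ there.
\item If $\ell(s)>\ell(t)$ for some $s$ in that interval, then $\ell(s)=\sup_{u\in(t,s]}w^-(u)\le(\ell(s)-\varepsilon)^+<\ell(s)$, a contradiction. So $\ell$ is constant on $[t,t+h)$.
\item Your own observation that a jump of $\ell$ at $t$ forces $\ell(t)=w^-(t)$, and hence $x(t)=0$, rules out an atom at $t$. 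So each point of $\{x>0\}$ has a right neighborhood $[t,t+h_t)$ of zero $d\ell$-measure.
\item The union of the open intervals $(t,t+h_t)$ is open and $d\ell$-null, since each compact subset is covered by finitely many of them. The remaining left endpoints form a countable set carrying no atoms. Hence $d\ell(\{x>0\})=0$ and $\int x\,d\ell=0$.
\end{enumerate}
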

\proof See \cite[Theorem 6.1]{chen-yao}.
\qed

Denote the solution map by $\Gamma:D_{\R_+}(\R_+,\R)\to D(\R_+,\R_+)\times D_+(\R_+,\R_+)$. Here, $\Gamma(w)=(\Gamma_1(w),\Gamma_2(w))=(x,\ell)$ is given by \eqref{x41}.

Note that the map is Lipschitz in the sense that, for
$w_1,w_2\in D_{\R_+}(\R_+,\R)$ and $t\in\R_+$,
\begin{equation}\label{x52}
\|\Gamma_1(w_1)-\Gamma_1(w_2)\|_t\le 2\|w_1-w_2\|_t,
\qquad
\|\Gamma_2(w_1)-\Gamma_2(w_2)\|_t\le \|w_1-w_2\|_t.
\end{equation}
That $\Gamma_2$ is $1$-Lipschitz will play an important role in our proofs.

Note that with the notation just introduced we can write \eqref{x30}--\eqref{x31} as follows. Let
\begin{equation}
\label{x39}
\hat Y_i=\hat W_i+\hat\gam_n\iota+\hat M_i.
\end{equation}
Then
\begin{equation}
\label{x42}
(\hat X_i,\hat L_i)=\Gamma(\hat X_{0i}+\hat Y_i-p\lan\hat L\ran),\qquad i\in[n].
\end{equation}

We now show that \eqref{x42} has a unique solution $(\hat X^n_i,\hat L^n_i)$ for given data $\hat X_{i0}+\hat Y_i$.

\begin{lemma}\label{lem22}
Fix $n\in\N$ and $\beta\in(-1,1)$. Let $w\in D_{\R_+^n}(\R_+,\R^n)$. Then the system of equations
\[
(x_i,\ell_i)=\Gamma(w_i+\beta\lan\ell\ran),
\qquad i\in[n]
\]
has a unique solution $(x,\ell)\in D(\R_+,\R_+)^n\times D_+(\R_+,\R_+)^n$.
\end{lemma}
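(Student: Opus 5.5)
Since $\lan\ell\ran$ is the only coupling between the $n$ equations, I will reduce the system to a fixed-point problem for a single scalar path $m=\lan\ell\ran\in D_+(\R_+,\R_+)$ and solve it by contraction. For a given $m\in D_+(\R_+,\R_+)$ set $\ell_i^m=\Gamma_2(w_i+\beta m)$. This is well defined because $w_i(0)+\beta m(0)=w_i(0)\ge0$, as $w\in D_{\R_+^n}$. Define
\[
T(m)=\frac1n\sum_{i\in[n]}\Gamma_2(w_i+\beta m).
\]
Then $T(m)$ is nondecreasing, starts at $0$, and is \cadlag, so $T$ maps $D_+(\R_+,\R_+)$ into itself. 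Solutions of the system correspond exactly to fixed points of $T$. Given a solution $(x,\ell)$, the path $m=\lan\ell\ran$ satisfies $T(m)=m$. Conversely, given a fixed point $m$, set $(x_i,\ell_i)=\Gamma(w_i+\beta m)$. Averaging gives $\lan\ell\ran=T(m)=m$, so this pair solves the system.

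The key estimate comes from the $1$-Lipschitz property \eqref{x52} of $\Gamma_2$. For every $t$,
\[
\|T(m)-T(m')\|_t\le\frac1n\sum_i\|\beta(m-m')\|_t=|\beta|\,\|m-m'\|_t .
\]
Because $|\beta|<1$, $T$ is a contraction in $\|\cdot\|_t$ uniformly in $t$. The bound holds at every $t$ simultaneously and $T$ is non-anticipating, since $\Gamma_2(v)$ on $[0,t]$ depends only on $v$ on $[0,t]$. So no time-localization argument is needed.

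For existence, I will run Picard iteration: start from $m^0=0$ and set $m^{k+1}=T(m^k)$. The contraction gives $\|m^{k+1}-m^k\|_t\le|\beta|^k\|m^1\|_t$, and $\|m^1\|_t<\infty$ because \cadlag paths are bounded on compacts. Hence $(m^k)$ is Cauchy in the sup norm on each $[0,t]$ and converges locally uniformly to some $m$. The space of \cadlag functions is closed under uniform limits, and monotonicity and $m(0)=0$ pass to the limit, so $m\in D_+(\R_+,\R_+)$. Continuity of $T$, again from \eqref{x52}, then gives $T(m)=m$.

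For uniqueness, if $m$ and $m'$ are both fixed points then $\|m-m'\|_t\le|\beta|\|m-m'\|_t$, which forces $m=m'$ on every $[0,t]$. The individual solutions $(x_i,\ell_i)=\Gamma(w_i+\beta m)$ are then determined uniquely by Lemma \ref{lem02}.

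I do not expect a real obstacle. The only points that need care are checking that $T$ preserves the class of admissible paths and that the uniform limit stays in $D_+(\R_+,\R_+)$, both of which are routine.
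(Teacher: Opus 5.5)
Your proof is correct and is essentially the paper's argument: both are contraction-mapping proofs driven by the $1$-Lipschitz property \eqref{x52} of $\Gamma_2$, with contraction constant $|\beta|<1$. The differences are cosmetic. You take the fixed point in the scalar average $m=\langle\ell\rangle\in D_+(\R_+,\R_+)$ via Picard iteration on all of $\R_+$ at once, whereas the paper applies Banach's theorem to the vector map $\ell\mapsto(\Gamma_2(w_i+\beta\langle\ell\rangle))_{i\in[n]}$ on $D([0,T],\R)^n$ with the max-over-$i$ uniform metric. Your version also has the small advantage of explicitly checking that the arguments of $\Gamma_2$ stay in its domain.
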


\proof
A proof of this result was given in \cite{har-rei} for a more general class of Skorohod problems in the orthant, assuming continuous data $w$. Their argument is based on a contraction mapping. The same argument applies without change to \cadlag paths $w$, and is sufficient for our purposes here. For completeness, we include the proof in the present special case, where the argument is particularly simple.

It suffices to consider the problem on a finite time interval $[0,T]$. Let $D^{(n)}_T=D([0,T],\R)^n$.
Fix $w$ as in the statement of the lemma and let $\Phi:D^{(n)}_T\to D^{(n)}_T$ be the map
\[
\Phi_i(\ell)=\Gamma_2(w_i+\beta\lan\ell\ran)=\sup_{s\le\cdot}[(w_i(s)+\beta\lan\ell\ran(s))^-],
\qquad
\Phi(\ell)=(\Phi_i(\ell))_{i\in[n]}.
\]
For $\ell,\bar\ell\in D^{(n)}_T$, we have
\[
\max_i\|\Phi_i(\ell)-\Phi_i(\bar\ell)\|_T
\le |\beta|\,\|\lan\ell\ran-\lan\bar\ell\ran\|_T
\le|\beta|\,\lan \|\ell-\bar\ell\|_T\ran\le|\beta|\max_i\|\ell_i-\bar\ell_i\|_T.
\]
Thus $\Phi$ is a contraction on $D^{(n)}_T$ equipped with the uniform metric, which is a complete metric space, and therefore $\Phi$ has a unique fixed point $\ell$. The components of $\ell$ are nondecreasing owing to the running supremum expression in \eqref{x41}. Finally, again by \eqref{x41}, it is easy to see that $x=w+\ell$ takes values in $\R_+^n$.
\qed

\subsection{The Brownian system}\label{sec23}

Here we present a result from \cite{atar26} for a Brownian particle system.

Let $(\tilde X^n_{0i})_{i\in[n]}$ be exchangeable $\R_+$-valued RVs for every $n$, serving as initial conditions, and satisfying $n^{-1}\sum_{i\in[n]}\del_{\tilde X^n_{0i}}\to\mu_0^{(1)}$ in $\calP(\R_+)$ in probability. Let $(\tilde W_i)_{i\in[n]}$ be mutually independent $(0,2)$-BMs, independent of the initial conditions. Applying Lemma \ref{lem22}, let $(\tilde X^n_i,\tilde L^n_i)$ be the unique solution to
\[
(\tilde X^n_i,\tilde L^n_i)=\Gamma(\tilde X^n_{0i}+\tilde W^n_i+\hat\gam\iota-p\lan\tilde L^n\ran).
\]
Clearly, the sample paths of $\tilde X^n$ and $\tilde L^n$ lie in $C(\R_+,\R_+)^n$ and $C_+(\R_+,\R_+)^n$, respectively.
Define, similar to \eqref{45},
\[
\tilde\mu^n_t=\frac{1}{n}\sum_{i\in[n]}\del_{(\tilde X^n_i(t),\tilde L^n_i(t))},
\qquad
\tilde\la^n(t)=\frac{1}{n}\sum_{i\in[n]}\tilde L^n_i(t).
\]

\begin{theorem}[\cite{atar26}]
\label{th2}
Let $(\oo X,\oo L,\la)$ be a solution to \eqref{MVSDE} with $\oo X_0\sim\mu_0^{(1)}$. Let $k\in\N$. Then $(\tilde X^n_i,\tilde L^n_i)_{i\in[k]}\To(\oo X_i,\oo L_i)_{i\in[k]}$ in $C^{(2k)}$, where $(\oo X_i,\oo L_i)$ are $k$ independent copies of $(\oo X,\oo L)$. Moreover, $(\tilde\mu^n,\tilde\la^n)\to(\mu,\la)$ in $C(\R_+,\calP(\R_+^2))\times C^{(1)}$, in probability, where $\mu_t=\PP\circ (\oo X(t),\oo L(t))^{-1}$ for $t\ge0$.
\end{theorem}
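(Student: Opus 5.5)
The plan is a Sznitman-type synchronous coupling between the particle system and i.i.d.\ copies of the nonlinear RBM. The coupling is built on the same Brownian motions $\tilde W_i$ and on suitably matched initial conditions. The only structural input is the $1$-Lipschitz property \eqref{x52} of $\Gamma_2$, combined with the fact that the interaction coefficient $p$ is $<1$. (If the normalization in \cite{atar26} differs, e.g.\ unit variance or $\frac{p}{n}\sum_{j\ne i}$, I would first rescale time and space, or absorb the $O(1/n)$ self-interaction term by the same Lipschitz estimate. This costs only a vanishing error.)

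\emph{Step 1: matching initial conditions.} On an enlarged space, take i.i.d.\ $\zeta_1,\dots,\zeta_n\sim\mu^{(1)}_0$ and an independent random tie-breaking, both independent of $(\tilde X^n_0,\tilde W)$. Set $\xi_i:=\zeta_{(r_i)}$, where $r_i$ is the rank of $\tilde X^n_{0i}$ and $\zeta_{(1)}\le\dots\le\zeta_{(n)}$ are the order statistics. By exchangeability, the rank permutation is uniform and independent of $\zeta$. Hence $(\xi_i)$ are i.i.d.\ $\mu_0^{(1)}$ and independent of $(\tilde W_i)$. Monotone matching is the optimal coupling on $\R$. Since both empirical measures converge to $\mu^{(1)}_0$, we get $\frac1n\sum_i (|\tilde X^n_{0i}\wedge K-\xi_i\wedge K|)\to0$ in probability for every $K$.

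\emph{Step 2: the contraction estimate.} Let $(\oo X_i,\oo L_i)=\Gamma(\xi_i+\tilde W_i+\hat\gamma\iota-p\la)$. By \cite[Theorem 2.4]{atar26} these are i.i.d.\ copies of the solution of \eqref{MVSDE}. By \eqref{x52},
\[
\|\tilde L^n_i-\oo L_i\|_T\le |\tilde X^n_{0i}-\xi_i|+p\|\lan\tilde L^n\ran-\la\|_T .
\]
Averaging and using $\|\lan\tilde L^n\ran-\la\|_T\le \lan\|\tilde L^n-\oo L\|_T\ran+\|\lan\oo L\ran-\la\|_T$ gives
\[
\|\lan\tilde L^n\ran-\la\|_T\le (1-p)^{-1}\Bigl(\lan|\tilde X^n_0-\xi|\ran+\|\lan\oo L\ran-\la\|_T\Bigr).
\]
The initial-condition term may lack a first moment, so I replace it by a truncated version. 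If $x,x'\ge K$, then $\Gamma_2(x+w)$ and $\Gamma_2(x'+w)$ both vanish on $[0,T]$ unless $\inf_{s\le T}w(s)\le -K$. Here $w=\tilde W_i+\hat\gamma\iota-p\ell$, and $\ell$ is bounded by the a priori bound on $\lan\tilde L^n\ran$ (itself bounded by $\|(\tilde W_i+\hat\gamma\iota)^-\|$-averages divided by $1-p$). So, with probability close to $1$ uniformly in $n$ for $K$ large, the fraction of indices where the truncation matters is small. This is the main technical nuisance, and I expect it to be the obstacle. My first attempt is to bound the contribution of such indices by $\lan\|\tilde W+\hat\gamma\iota\|_T 1_{\{\|\tilde W_i+\hat\gamma\iota\|_T\ge K-c\}}\ran$ and use the LLN.

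\emph{Step 3: LLN for the nonlinear copies.} Since the $\oo L_i$ are i.i.d.\ with $\E\|\oo L_i\|_T<\iy$, we have $\lan\oo L\ran(t)\to\la(t)$ a.s.\ for each $t$. Because the paths are nondecreasing and $\la$ is continuous, a Dini/Glivenko--Cantelli argument upgrades this to $\|\lan\oo L\ran-\la\|_T\to0$. Hence $\|\lan\tilde L^n\ran-\la\|_T\to0$ in probability.

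\emph{Step 4: conclusions.} For fixed $i\le k$, Step 2 gives $\|\tilde L^n_i-\oo L_i\|_T\to0$ in probability, using $|\tilde X^n_{0i}-\xi_i|\to0$ in probability, which follows from Step 1 and exchangeability. It then gives $\|\tilde X^n_i-\oo X_i\|_T\le 2(\cdots)$ similarly. This yields the $k$-marginal convergence to independent copies. For the empirical measures, $\frac1n\sum_i\del_{(\oo X_i,\oo L_i)}\to\PP\circ(\oo X,\oo L)^{-1}$ in $\calP(C(\R_+,\R^2_+))$ a.s.\ by Varadarajan's theorem. Moreover, the bounded-Lipschitz distance between the two empirical measures (pathwise, on $[0,T]$) is bounded by $\lan\|(\tilde X^n,\tilde L^n)-(\oo X,\oo L)\|_T\wedge1\ran\to0$. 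Projecting to time marginals gives $\tilde\mu^n\to\mu$ in $C(\R_+,\calP(\R_+^2))$, and Step 3 gives $\tilde\la^n\to\la$.
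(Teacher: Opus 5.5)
Your argument is correct, but it takes a genuinely different route from the paper.

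\textbf{The paper's route.} The paper does not prove propagation of chaos itself. It imports the result from \cite{atar26}, where it is established for the system with interaction $\frac{p}{n-1}\sum_{j\ne i}K^n_j$. The only work done in the paper is to compare that system with the one here, both driven by the same $\tilde W_i$ and the \emph{same} initial conditions. The $1$-Lipschitz property of $\Gamma_2$ and $p<1$ give $\E\|\tilde L^n_1-K^n_1\|_t\le 2\eps_n(1-p)^{-1}\E\tilde L^n_1(t)$ with $\eps_n=(n-1)^{-1}$. This is closed using the uniform bound on $\E K^n_1(t)$ from \cite[Lemma 3.6]{atar26}. Because the initial data coincide exactly, no matching of initial conditions and no truncation is needed.

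\textbf{Your route.} You couple the particle system directly with i.i.d.\ nonlinear copies driven by the same Brownian motions. You use the same contraction mechanism, plus a law of large numbers for $\lan\oo L\ran$. This makes the theorem self-contained: the only external input is the solution $\la$, which the statement supplies anyway. It also produces an explicit coupling that can be made quantitative under moment assumptions. The cost is the rank-matching construction and the truncation needed for initial laws without first moments.

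\textbf{The truncation step closes.} On $\{\inf_{s\le T}w(s)\ge -K\}$ one has $\Gamma_2(x+w)=\Gamma_2(x\w K+w)$ on $[0,T]$. So on the good indices you may replace $|\tilde X^n_{0i}-\xi_i|$ by $|\tilde X^n_{0i}\w K-\xi_i\w K|$, which your Step 1 controls.

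On the bad indices, bound $\|\tilde L^n_i-\oo L_i\|_T$ by $\tilde L^n_i(T)+\oo L_i(T)$. Then use
\[
\tilde L^n_i(T)+\oo L_i(T)\le 2\|(\tilde W_i+\hat\gamma\iota)^-\|_T+p\bigl(\lan\tilde L^n\ran(T)+\la(T)\bigr),
\]
together with $\lan\tilde L^n\ran(T)\le(1-p)^{-1}\lan\|(\tilde W+\hat\gamma\iota)^-\|_T\ran$. On this high-probability event, the bad set is contained in an event depending on $\tilde W_i$ alone. Keep the constant term multiplying the indicator; its averaged contribution is also small for large $K$, uniformly in $n$, by the LLN.

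\textbf{A minor repair.} Your $\xi_i$, and hence $\oo L_i$, depend on $n$. So the a.s.\ LLN and Varadarajan's theorem should be replaced by their in-probability versions for rows with a fixed law. This is harmless, since all the claimed convergences are in probability or in distribution.
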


\proof
In \cite{atar26}, a Brownian particle system is considered in which the boundary terms are averaged in a slightly different way, namely
\begin{equation}\label{01a}
Y_i^n(t)=\tilde X^n_{0,i}+\tilde W_i^n(t)+\hat\gamma t+K_i^n(t)-\frac{p}{n-1}\sum_{j\ne i}K_j^n(t),
\qquad i\in[n], \ t\ge0,
\end{equation}
with $K_i$ the reflection terms.
The results show precisely the statement of the theorem for $(Y^n_i,K^n_i)$. Hence it suffices to show that the difference is negligible. By exchangeability, it suffices to prove that $\|\tilde X^n_1-Y^n_1\|_t\to0$ and $\|\tilde L^n_1-K^n_1\|_t\to0$ in probability. To this end, using \eqref{x52},
\begin{align*}
\|\tilde L_i-K_i\|_t &\le p\Big\|\frac{1}{n-1}\sum_{j\ne i}\tilde L_j-\frac{1}{n}\sum_{j}K_j\Big\|_t
\\
&= p\Big\|\frac{1}{n}\sum_j(\tilde L_j-K_j)+\Big(\frac{1}{n}-\frac{1}{n-1}\Big)\sum_j\tilde L_j-\frac{1}{n-1}\tilde L_i\Big\|_t.
\end{align*}
Hence, with $\eps_n=(n-1)^{-1}$,
$\|\tilde L_i-K_i\|_t\le p\lan \|\tilde L-K\|_t\ran+\eps_n\lan\tilde L\ran(t)+\eps_n\tilde L_i(t)$. Thus
\[
\E\|\tilde L_1-K_1\|_t\le 2\eps_n(1-p)^{-1}\E\tilde L_1(t)
\le 2\eps_n(1-p)^{-1}\{\E\|\tilde L_1-K_1\|_t+\E K_1(t)\}.
\]
It follows from \cite[Lemma 3.6]{atar26} that $\E K_1(t)$ remains bounded as $n\to\iy$. The above thus shows that $\E\|\tilde L_1-K_1\|_t\to0$.
The result $\E\|\tilde X_1-Y_1\|_t\to0$ follows.
\qed

\section{Proof of main result}\label{sec3}

We now state the two main lemmas on which the proof of Theorem \ref{th1} is based.
First is an estimate on the martingale term in \eqref{x30}.

\begin{lemma}\label{lem20}
$\lim_n\E[\|\hat M_1\|_t^2]=0$.
\end{lemma}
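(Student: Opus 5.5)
We need $\E\|\hat M_1\|_t^2\to0$, where
\[
\hat M_1=\sum_{j\ge0}\hat M_{1j}-\sum_{j\ge1}\hat M_{j1},\qquad \hat M_{ij}(t)=\int_0^t\check I_i(s-)\,d\hat\Pi_{ij}(s).
\]
The plan is to apply Doob's $L^2$ maximal inequality. This reduces the claim to bounding $\E[\hat M_1(t)^2]$, which is controlled by the expected predictable quadratic variation.

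First, $\hat M_1$ is a square-integrable martingale with respect to the filtration generated by the initial condition and the Poisson processes. Each integrand $\check I_i(s-)$ is predictable and bounded by $1$, and the compensated processes $\hat\Pi_{ij}$ are martingales. The Poisson processes $\check\Pi_{ij}$, $(i,j)\in J^n$, are mutually independent, so they have no common jumps and the stochastic integrals against distinct ones are orthogonal. The self-routing term $\hat M_{11}$ appears once with a plus sign and once with a minus sign, so it cancels; this only helps.

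Since $\check I_i^2=\check I_i$ and $\int_0^t\check I_i(s)\,ds=n^{-1/2}\hat L_i(t)$, we get
\[
\E\langle \hat M_1\rangle(t)\le n^{-1}\Big[qn\,\E\!\int_0^t\check I_1\,ds+\sum_{j\in[n]}p\,\E\!\int_0^t\check I_1\,ds+\sum_{j\in[n]}p\,\E\!\int_0^t\check I_j\,ds\Big].
\]
The three terms come from the departure stream $\check\Pi_{10}$ (rate $qn$), the routing streams out of station $1$ (rate $p$ each), and the routing streams into station $1$ (rate $p$ each). Evaluating, and using exchangeability for the last term,
\[
\E\langle\hat M_1\rangle(t)\le n^{-1/2}\big(q+p\big)\E\hat L_1(t)+p\,n^{-1/2}\E\lan\hat L\ran(t)\le 2n^{-1/2}\E\hat L_1(t).
\]
Doob's inequality then gives
\[
\E\|\hat M_1\|_t^2\le 4\,\E\hat M_1(t)^2\le 8n^{-1/2}\E\hat L_1(t).
\]
So it suffices to show that $\E\hat L_1(t)=O(1)$, or even just $o(n^{1/2})$.

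The main obstacle is this a priori bound on $\E\hat L_1(t)$. A crude bound is available for free: $\hat L_1(t)\le n^{-1/2}\cdot nt=n^{1/2}t$. This only gives $\E\langle\hat M_1\rangle(t)\le 2t$, which is not enough.

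For the sharper bound, use \eqref{x42} together with the representation \eqref{x41}. Monotonicity of $\Gamma_2$ and $\lan\hat L\ran\ge0$ give
\[
\hat L_1(t)\le \sup_{s\le t}\big(\hat Y_1(s)-p\lan\hat L\ran(s)\big)^-\le\|\hat Y_1\|_t+p\lan\hat L\ran(t).
\]
Averaging over $i$ and using that $\lan\hat L\ran$ is nondecreasing yields
\[
(1-p)\lan\hat L\ran(t)\le\lan\|\hat Y\|_t\ran,
\]
and hence
\[
\E\hat L_1(t)\le \E\|\hat Y_1\|_t+p\,q^{-1}\E\|\hat Y_1\|_t .
\]
Next, $\|\hat Y_1\|_t\le\|\hat W_1\|_t+|\hat\gamma_n|t+\|\hat M_1\|_t$. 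The term $\hat W_1$ is a sum of independent compensated Poisson processes with total variance rate $\gamma_n+q+p+p=O(1)$, so by Doob $\E\|\hat W_1\|_t\le C$. For $\hat M_1$ we have, by the previous estimates,
\[
\E\|\hat M_1\|_t\le(\E\|\hat M_1\|_t^2)^{1/2}\le C n^{-1/4}(\E\hat L_1(t))^{1/2}.
\]
Writing $a_n=\E\hat L_1(t)$, which is finite by the crude bound, this gives
\[
a_n\le C+Cn^{-1/4}a_n^{1/2}.
\]
Therefore $a_n$ is bounded uniformly in $n$, and consequently $\E\|\hat M_1\|_t^2\le 8n^{-1/2}a_n\to0$.
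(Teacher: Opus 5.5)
Your proof is correct and follows the paper's argument essentially step for step: Doob's inequality plus the quadratic-variation computation give $\E\|\hat M_1\|_t^2\le c\,n^{-1/2}\E\hat L_1(t)$. The explicit Skorohod map together with exchangeability and $p<1$ gives $\E\hat L_1(t)\le c+c\,\E\|\hat M_1\|_t$, and the two estimates close into a quadratic inequality. Your only deviations are cosmetic: you average pathwise before taking expectations, and you solve for $\E\hat L_1(t)$ rather than for $\E\|\hat M_1\|_t^2$. You also usefully state the a priori bound $\hat L_1(t)\le n^{1/2}t$, which supplies the finiteness that the paper's final algebraic step uses implicitly.
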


Next, let us rewrite $\hat W=\hat W^n$ of \eqref{x33} in a way that emphasizes that its structure involves the row-sum and column-sum of an array of compensated Poissons:
\begin{equation}\label{x34}
\hat W_i=\hat\Pi_{0i}-\hat\Pi_{i0}-\hat\Pi^\rs_i+\hat\Pi^\cs_i,
\qquad
\hat\Pi^\rs_i:=\sum_{j\in[n]}\hat\Pi_{ij},
\qquad
\hat\Pi^\cs_i:=\sum_{j\in[n]}\hat\Pi_{ji}.
\end{equation}
Clearly, the components $\hat W_i$ are dependent.
The following lemma will allow us construct a perturbation of $\hat W_i$ whose components are independent.

\begin{lemma}\label{lem21}
After augmenting the space with additional Poisson processes, if necessary, there exists, for each $n$, a tuple $(\Pi^*_i)_{i\in[n]}$ of i.i.d.\ rate-$pn$ Poisson processes that are independent of $(\hat\Pi_{0i},\hat\Pi_{i0},\hat\Pi^\rs_i)_{i\in[n]}$, such that, denoting $\gPi_i=n^{-1/2}(\Pi^*_i-pn\iota)$, the processes $(\gPi_i-\hat\Pi^\cs_i)_{i\in[n]}$ are exchangeable and $\E[\|\gPi_1-\hat\Pi^\cs_1\|_t]\to0$ as $n\to\iy$.
\end{lemma}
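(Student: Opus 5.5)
The plan is to realize the whole routing array through a single superposed Poisson stream carrying i.i.d.\ origin and destination labels. The column sums can then be rebuilt from an \emph{independent} copy of the superposed stream while reusing the \emph{same} destination labels.

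\emph{Step 1: label representation.} Let $N(t)=\sum_{i,j\in[n]}\check\Pi_{ij}(t)$. This is a Poisson process of rate $pn^2$. By the colouring theorem for Poisson processes, we may write $N$ together with i.i.d.\ marks $(V_k,U_k)_{k\ge1}$ that are uniform on $[n]^2$ and independent of $N$. Here $V_k$ is the origin and $U_k$ the destination of the $k$-th event. Moreover, $(N,V,U)$ is independent of $(\check\Pi_{0i},\check\Pi_{i0})_{i\in[n]}$. The row sums $\check\Pi^{\rs}_i(t)=\#\{k\le N(t):V_k=i\}$ are functions of $(N,V)$ only. The column sums are $\check\Pi^{\cs}_i(t)=S_i(N(t))$, where $S_i(m)=\#\{k\le m:U_k=i\}$.

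\emph{Step 2: the construction.} Augment the space with a rate-$pn^2$ Poisson process $N'$ that is independent of everything else. Set $\Pi^*_i(nt)=S_i(N'(t))$; that is, the $k$-th event of $N'$ receives the label $U_k$. Since $U$ is independent of $(N,V)$ and of the exogenous and exit streams, and $N'$ is independent of all of these, the family $(\Pi^*_i)$ is independent of $(\hat\Pi_{0i},\hat\Pi_{i0},\hat\Pi^{\rs}_i)_{i\in[n]}$. By the thinning property, the $\Pi^*_i$ are i.i.d.\ Poisson processes of rate $pn$ in original time. All ingredients are invariant in law under a simultaneous permutation of the labels $[n]$, applied to $V$, to $U$ and to the indices of the exogenous and exit streams. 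This gives exchangeability of $(\gPi_i-\hat\Pi^{\cs}_i)_i$.

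\emph{Step 3: the estimate.} The compensators of $\gPi_1$ and $\hat\Pi^{\cs}_1$ coincide, both equal to $pn^{1/2}\iota$. Hence
\[
\gPi_1-\hat\Pi^{\cs}_1=n^{-1/2}\bigl(S_1(N'(\cdot))-S_1(N(\cdot))\bigr).
\]
Write $S_1(m)=m/n+R(m)$, where $R$ is a centered random walk with Bernoulli$(1/n)$ increments, independent of $(N,N')$. This splits the difference into two pieces.

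The first piece is $n^{-3/2}(N'-N)$. By Doob's inequality, $\E\|N'-N\|_t\le Cn\sqrt t$, so this piece contributes $O(n^{-1/2})$.

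The second piece is $n^{-1/2}\sup_{s\le t}|R(N'(s))-R(N(s))|$. To control it, fix a large constant $c$ and work on the event $A=\{N(t)\vee N'(t)\le 2pn^2t,\ \|N-N'\|_t\le cn\log n\}$. The complement $A^c$ has probability $o(1)$ by Doob's inequality and Poisson large deviations. Off $A$, the trivial bound by $n^{-1/2}(N(t)+N'(t))/n$ together with Cauchy--Schwarz gives a vanishing contribution.

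On $A$, the quantity is bounded by the modulus of continuity of $R$ on $[0,2pn^2t]$ at window size $cn\log n$. Cover $[0,2pn^2t]$ by about $n/\log n$ blocks of that length. On each block, $R$ has variance of order $\log n$ and exponential tails, by Bernstein's inequality applied to Bernoulli sums. A union bound together with a maximal inequality then gives an expected supremum of order $\log n$. After multiplying by $n^{-1/2}$, this tends to $0$. Combining the two pieces yields $\E\|\gPi_1-\hat\Pi^{\cs}_1\|_t\to0$.

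I expect the main obstacle to be Step 3, namely controlling the supremum over $t$ of the increment of $R$ between the random times $N(s)$ and $N'(s)$, rather than its size at a fixed time. The fixed-time bound $O(t^{1/4})$ is immediate. Upgrading to the supremum needs the chaining and union-bound argument over blocks described above, and the exponential tails from Bernstein's inequality are essential there.
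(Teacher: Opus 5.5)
Your construction is the paper's own (Lemma~\ref{lem14}). You superpose the routing array into one rate-$pn^2$ stream with i.i.d.\ uniform origin/destination labels, and attach the \emph{same} destination labels to an independent copy of that stream. Your independence, thinning and exchangeability arguments also match the paper's. There is one small slip: you want $\Pi^*_i(t)=S_i(N'(t))$, in the accelerated time where $\check\Pi$, $\hat\Pi^\cs_i$ and $\gPi_i$ live. As written, $\Pi^*_i(nt)=S_i(N'(t))$ has rate $p$, not $pn$. Your Step~3 in fact uses the correct version.

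The estimate is where you differ. The paper does not center. Since $\pi=S_1$ is nondecreasing, on the good event $|\pi(S(s))-\pi(T(s))|\le\pi(r+l_n)-\pi(r)$, where $r=S(s)\wedge T(s)$ is an integer in $[0,2\beta tn^2]$. The supremum over $s$ therefore collapses to a maximum over $O(n^2)$ fixed binomial windows. One Chernoff bound and a union bound handle this, with no maximal inequality or chaining. The price is the generous window $l_n=\lceil n^{1+\alpha/2}\rceil$ and a bound $O(n^\alpha)$ instead of $O(\log n)$; either suffices against the factor $n^{-1/2}$. Your centering $S_1(m)=m/n+R(m)$ destroys this monotonicity, which is why you need Bernstein plus a block maximal inequality. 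It buys the sharper rate $O(n^{-1/2}\log n)$. That rate is also available without centering: the uncentered count over $cn\log n$ steps is binomial with mean $c\log n$, and the union bound over $O(n^2)$ starting points still works.

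One step does not close as written. Off $A$, the trivial bound should read $S_1(N(t))+S_1(N'(t))+(N(t)+N'(t))/n$, since $|R(m)|$ is not bounded by $m/n$. After the factor $n^{-1/2}$, its $L^2$ norm is of order $n^{1/2}$. Cauchy--Schwarz then gives $O(n^{1/2}\PP(A^c)^{1/2})$, which requires $\PP(A^c)=o(n^{-1})$, not merely $o(1)$. The $L^2$ Doob inequality only gives $\PP(\|N-N'\|_t>cn\log n)=O((\log n)^{-2})$, which is not enough. Either of two repairs works:
\begin{itemize}
\item Use an exponential (Bernstein-type) martingale inequality for $N-N'$. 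This gives a bound of order $e^{-c'(\log n)^2}$, which is the role of $\PP(A_n^c)\le c_1e^{-c_2n^\alpha}$ in the paper.
\item Condition on $(N,N')$ and use the independence of $R$ together with Doob's inequality for $R$. This bounds the conditional mean of the off-$A$ integrand by $O(\sqrt{(N(t)\vee N'(t))/n})$, which is $O(1)$ after scaling on $\{N(t)\vee N'(t)\le 2pn^2t\}$. Then $\PP(A^c)=o(1)$ does suffice.
\end{itemize}
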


In Sections \ref{sec31} and \ref{sec32}, we present the proofs of Lemmas \ref{lem20} and \ref{lem21}, respectively. In Section \ref{sec33}, we give the proof of Theorem \ref{th1}.

\subsection{Proof of Lemma \ref{lem20} (Estimating the martingale term)}
\label{sec31}

We fix $t$ and let $c$ denote a positive constant that does not depend on $n$ (but may depend on $t$), whose value may change from one appearance to another.

The proof proceeds in two steps, establishing the following two estimates:
\begin{equation}\label{b1}
\E[\|\hat M_1\|^2_t] \le cn^{-1/2}\E\hat L_1(t)
\end{equation}
\begin{equation}\label{b2}
\E\hat L_1(t)\le c+c\E\|\hat M_1\|_t.
\end{equation}
Together, these estimates imply the result. Indeed, the inequality $x^2\le cn^{-1/2}(1+x)$ implies $x^2\le cn^{-1/2}$.

Toward proving \eqref{b1}, we have by \eqref{x46},
\begin{align*}
\hat M_1(t) &= \hat M_{10}(t)+\sum_{j\in[n],j\ne 1}(\hat M_{1j}(t)-\hat M_{j1}(t))
\\
&=\int_0^t\check I_1(s-)d\hat\Pi_{10}(s)
+\sum_{j\in[n],j\ne 1}\Big(\int_0^t\check I_1(s-)d\hat\Pi_{1j}(s)-\int_0^t\check I_j(s-)d\hat\Pi_{j1}(s)\Big).
\end{align*}
Because $\hat\Pi_{10}$, $\hat\Pi_{1j}$, $\hat\Pi_{j1}$, $j\ne1$ do not have common jump times, we have
\[
[\hat M_1](t)=n^{-1}\int_0^t\check I_1(s-)d\check\Pi_{10}(s)
+n^{-1}\sum_{j\in[n],j\ne 1}\Big(\int_0^t\check I_1(s-)d\check\Pi_{1j}(s)+\int_0^t\check I_j(s-)d\check\Pi_{j1}(s)\Big).
\]
Recalling that the compensators of $\check\Pi_{10}$, $\check\Pi_{1j}$, $\check\Pi_{j1}$ are $qnt$, $pt$ and $pt$, respectively, and using exchangeability,
\begin{align*}
\E\{[\hat M_1](t)\}&=\E\int_0^t\check I_1(s)q\,ds
+n^{-1}\sum_{j\in[n],j\ne 1}\E\int_0^t(\check I_1(s)+\check I_j(s))p\,ds
\\
&= (q+2pn^{-1}(n-1))\E\int_0^t\check I_1(s)ds
\\
&\le 2\,\E\int_0^t\check I_1(s)ds.
\end{align*}
Noting that $\int_0^t\check I_1(s)ds=n^{-1/2}\hat L_1(t)$ and using Doob's inequality gives \eqref{b1}.

Next, to show \eqref{b2}, note that
by \eqref{x42}, Lemma \ref{lem02}, and then \eqref{x39},
\begin{align*}
\hat L_1(t)&=\Gamma_2(\hat X_{01}+\hat Y_1-p\lan\hat L\ran)(t)
\\
&=\sup_{s\le t}[(\hat X_{01}+\hat Y_1(s)-p\lan\hat L\ran(s))^-]
\\
&\le\sup_{s\le t}[(\hat Y_1(s)-p\lan\hat L\ran(s))^-]
\\
&\le
\|\hat W_1\|_t+|\hat\gam_n|t+\|\hat M_1\|_t+p\lan\hat L\ran(t),
\end{align*}
where $\|\lan\hat L\ran\|_t=\lan\hat L\ran(t)$ because $\hat L_i$ are nondecreasing. Taking expectation, using exchangeability and the fact that $p<1$ gives
\begin{align}
\label{x45}
\E\hat L_1(t)\le c+c\E\|\hat W_1\|_t+c\E\|\hat M_1\|_t.
\end{align}
By \eqref{x33},
\[
\hat W_1=\hat\Pi_{01}-\hat\Pi_{10}+\sum_{j\in[n],j\ne 1}(\hat\Pi_{j1}-\hat\Pi_{1j}).
\]
In particular, $\hat W_1$ is a martingale and
\[
[\hat W_1](t)=n^{-1}\Big(\check\Pi_{01}(t)+\check\Pi_{10}(t)+\sum_{j\in[n],j\ne 1}(\check\Pi_{j1}(t)+\check\Pi_{1j}(t))\Big).
\]
The expression in parentheses is a Poisson RV of parameter $(n\gam_n+nq+2(n-1)p)t$, implying that $\sup_n\E\{[\hat W_1](t)\}<\iy$. As a result, $\sup_n\E\|\hat W_1\|_t<\iy$. Using this in \eqref{x45} gives \eqref{b2}.
\qed

\subsection{Proof of Lemma \ref{lem21} (Decoupling the routing noise)}
\label{sec32}

The proof is based on a construction stated in Lemma \ref{lem14} below. To present this lemma, let, for each $n\in\N$, $N_{ij}=N^{(n)}_{ij}$, $(i,j)\in[n]^2$, be an array of i.i.d.\ rate-$\beta$ Poisson processes, for some constant $\beta>0$. Let
\[
U_i=\sum_jN_{ij}, \quad i\in[n],\qquad
V_j=\sum_iN_{ij}, \quad j\in[n].
\]
Then $(U_i)_{i\in[n]}$ are independent rate-$\beta n$ Poisson processes, and so are $V_j$, $j\in[n]$. 
The families $(U_i)_{i\in[n]}$ and $(V_i)_{i\in[n]}$ are each composed of independent Poisson processes, but they are not independent of one another since they are built from the same Poisson array. The purpose of the lemma is to construct a new column-sum process $Z$, independent of the row sums $U$, which has the same law as $V$ and remains asymptotically close to $V$ at the scale needed for the diffusion approximation.

\begin{lemma}\label{lem14}
After augmenting the probability space to support additional Poisson processes, if necessary, there exist, for each $n\in\N$, independent rate-$\beta n$ Poisson processes $Z^{(n)}_j$, $j\in[n]$ such that $Z^{(n)}=\{Z^{(n)}_j\}$ is independent of $U^{(n)}=\{U^{(n)}_i\}$.
Moreover, the processes $(V^{(n)}_j,Z^{(n)}_j)$, $j\in[n]$, are exchangeable, and for any $t$ and any $\kap>0$,
\[
n^{-\kap}\E[\|Z^{(n)}_1-V^{(n)}_1\|_t]\to0.
\]
\end{lemma}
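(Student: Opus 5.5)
The idea is to represent the Poisson array $\{N_{ij}\}$ by its superposition together with i.i.d.\ uniform marks, and to note that the column marks are independent of everything that determines $U$. The dependence between $U$ and $V$ then passes only through the common jump times, and we remove it by swapping in an independent copy of those jump times.

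\textbf{Step 1 (representation).} Let $S=\sum_{i,j}N_{ij}$, a rate-$\beta n^2$ Poisson process with jump times $\tau_1<\tau_2<\cdots$. Each jump $\tau_k$ carries a mark $(\eta_k,\xi_k)\in[n]^2$ telling which $N_{ij}$ jumped. By the marking theorem, the marks are i.i.d.\ uniform on $[n]^2$ and independent of $S$, so $(\eta_k)$ and $(\xi_k)$ are mutually independent i.i.d.\ uniform on $[n]$. Then $U_i(t)=\#\{k\le S(t):\eta_k=i\}$ is a function of $(S,\eta)$. Also, writing $C_j(m)=\#\{k\le m:\xi_k=j\}$, we have $V_j(t)=C_j(S(t))$, and $\xi$ is independent of $(S,\eta)$, hence of $U$.

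\textbf{Step 2 (construction).} Augment the space with a rate-$\beta n^2$ Poisson process $S'$ independent of $(S,\eta,\xi)$, and set $Z_j(t)=C_j(S'(t))$. Since $Z$ is a function of $(S',\xi)$, which is independent of $(S,\eta)$, the family $Z$ is independent of $U$. Moreover $Z$ is the $\xi$-thinning of a rate-$\beta n^2$ Poisson process into $n$ equally likely classes, so the $Z_j$ are independent rate-$\beta n$ Poisson processes. Joint exchangeability of $(V_j,Z_j)_{j\in[n]}$ holds because the law of $(S,S',\xi)$ is invariant under permutations of the label set $[n]$ acting on $\xi$.

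\textbf{Step 3 (estimate).} Write $C_1(m)=m/n+R(m)$, where $R$ is a mean-zero random walk with increments $1_{\{\xi_k=1\}}-1/n$, each of variance at most $1/n$. Then
\[
\|Z_1-V_1\|_t\le n^{-1}\|S-S'\|_t+\sup_{s\le t}|R(S(s))-R(S'(s))|.
\]
By Doob's inequality, $\E\|S-S'\|_t\le cn\sqrt t$, so the first term has bounded expectation. For the second term, fix $A=n^2 t(2\beta)$ and $\Delta=n\log n$. Off an event of probability $O(n^{-2})$, we have $S(t),S'(t)\le A$ and $\|S-S'\|_t\le\Delta$. On the complement, the second term is at most $\sup\{|R(m)-R(m')|:m,m'\le A,\ |m-m'|\le\Delta\}$.

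To bound this supremum, partition $[0,A]$ into $O(A/\Delta)=O(n/\log n)$ blocks of length $\Delta$. On each block, apply Bernstein's inequality (or Doob's inequality for the exponential martingale) to the maximal increment of $R$. The increment variance over a block is at most $\Delta/n=\log n$ and the increments are bounded by $1$. A union bound over the blocks then gives that the supremum is $O(\log n)$ with probability $1-O(n^{-2})$, and a tail integration gives expectation $O(\log n)$. On the exceptional event, use the crude bound $\|Z_1-V_1\|_t\le V_1(t)+Z_1(t)$ together with Cauchy--Schwarz; its contribution is $O(n\cdot n^{-1})=O(1)$. Altogether $\E\|Z_1-V_1\|_t=O(\log n)$, which is $o(n^\kap)$ for every $\kap>0$.

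The main obstacle is this last modulus-of-continuity estimate for $R$ along the random windows $[S(s)\wedge S'(s),S(s)\vee S'(s)]$. The key point is that $\xi$ is independent of both $S$ and $S'$, so $R$ can be controlled uniformly over deterministic index windows before substituting the random times. One must also check that $\log$-factor losses are harmless, which they are since only $o(n^\kap)$ is required.
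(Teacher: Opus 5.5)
Your proof is correct and follows the paper's argument. The construction is the same: the superposition $S$ with i.i.d.\ uniform row and column labels, plus an independent copy $S'$ that drives the same column labels, which makes $Z$ independent of $U$. The estimate also has the same structure: high-probability control of $\|S-S'\|_t$, reduction to a modulus of continuity of the label-counting walk over deterministic windows via an exponential inequality and a union bound, and Cauchy--Schwarz on the exceptional event.

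The only difference is that you center the walk, $C_1(m)=m/n+R(m)$, and handle the drift part $n^{-1}\|S-S'\|_t$ by Doob's inequality. This lets you use windows of size $n\log n$ and get the sharper bound $\E\|Z_1-V_1\|_t=O(\log n)$. The paper instead uses uncentered binomial windows of size $n^{1+\alpha/2}$ and gets $O(n^{\alpha})$; either bound suffices for the lemma.
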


\proof
Let $S=\sum_{ij}N_{ij}$. It is a rate-$\beta n^2$ Poisson process. Let $\sig_k$, $k\ge1$ denote the jump times of $S$ in increasing order. Let $(R_k,C_k)$ be the row and column labels associated with the $k$-th arrival. Namely, $(R_k,C_k)$ is the unique $(i,j)$ for which $N_{ij}$ makes a jump at time $\sig_k$. Clearly, both $\{R_k\}$ and $\{C_k\}$ are i.i.d.\ uniform on $[n]$. Moreover,
$\{R_k\}$, $\{C_k\}$ and $S$ are mutually independent.

The processes $N_{ij}$ can be recovered from $S$ and the labels via
\[
N_{ij}(t)=\sum_{k:\sig_k\le t}1_{\{(R_k,C_k)=(i,j)\}}.
\]
Consequently, $U$ and $V$ can be written as
\[
U_i(t)=\sum_{k:\sig_k\le t}1_{\{R_k=i\}},
\qquad
V_j(t)=\sum_{k:\sig_k\le t}1_{\{C_k=j\}}.
\]

To construct $Z=Z^{(n)}$, let $T=T^{(n)}$ be another rate-$\beta n^2$ Poisson process, independent of $\{N_{ij}\}$. Let $\tau_k$, $k\ge1$ be its jump times in increasing order. Let
\[
Z_j(t)=\sum_{k:\tau_k\le t}1_{\{C_k=j\}}.
\]
Then obviously, $Z$ has same law as $V$. Moreover, because $Z$ is constructed in terms of $T$ and $\{C_k\}$ alone, while $U$, as shown above, can be expressed in terms of $S$ and $\{R_k\}$ alone, the independence of $(T,\{C_k\})$ and $(S,\{R_k\})$ implies that of $U$ and $Z$.
Note that the exchangeability of the $n$ pairs $(V_j,Z_j)$, $j\in[n]$, is clear from the construction.

It remains to show the estimate on $Z_1-V_1$.
Let $\pi(m)=\sum_{k=1}^m1_{\{C_k=1\}}$. Then
\[
V_1(t)=\pi(S(t)),
\qquad
Z_1(t)=\pi(T(t)).
\]
Thus
\[
\|Z_1-V_1\|_t=\sup_{s\le t}|\pi(S(s))-\pi(T(s))|.
\]
Let $\Theta=\Theta(n,t)=\|T-S\|_t$. Fix $0<\al<\kap\w 2$. For a bound on $\Theta$, note that $T-S$ is a martingale with jumps of size one and quadratic variation $T+S$. By a standard exponential martingale inequality for compensated Poisson processes, there are constants
$c_1,c_2>0$, depending on $t$ and $\beta$, such that
\[
\PP(\Theta>n^{1+\al/2})\le c_1e^{-c_2n^\al}.
\]
A Chernoff bound for $S(t)$, equivalently $T(t)$, gives $\PP(S(t)>2\beta tn^2)\le c_1e^{-c_2n^2}$. Hence, the event
\[
A_n=\Big\{\max(S(t),T(t))\le 2\beta t n^2,\ \Theta\le n^{1+\al/2}\Big\}
\]
satisfies
\[
\PP(A_n^c)\le c_1e^{-c_2n^\al}.
\]
Let $l_n=\lceil n^{1+\al/2}\rceil$. On $A_n$,
\[
\|Z_1-V_1\|_t
\le H(n,t)
:=\sup\{\pi(r+l_n)-\pi(r):0\le r\le 2\beta tn^2\}.
\]
For fixed $r$, $\pi(r+l_n)-\pi(r)$ is binomial with parameters $(l_n,n^{-1})$.
Hence, by a Chernoff bound,
\[
\PP(\pi(r+l_n)-\pi(r)>n^\al)
\le e^{-n^\al+e n^{\al/2}}
\le e^{-\frac12 n^\al}
\]
for all sufficiently large $n$. Since there are at most $c n^2$ possible values
of $r$, it follows that
\[
\PP(H(n,t)>n^\al)
\le c n^2 e^{-\frac12 n^\al}.
\]
Moreover, $H(n,t)\le l_n$. Therefore
\[
\E H(n,t)
\le n^\al+l_n\,\PP(H(n,t)>n^\al)
\le n^\al+o(1).
\]
Finally,
\[
\|Z_1-V_1\|_t
\le H(n,t)+(V_1(t)+Z_1(t))1_{A_n^c}.
\]
Since $V_1(t)$ and $Z_1(t)$ are rate-$\beta n$ Poisson random variables,
\[
\E[(V_1(t)+Z_1(t))^2]\le c n^2.
\]
By Cauchy--Schwarz and the exponential bound on $\PP(A_n^c)$,
\[
\E[(V_1(t)+Z_1(t))1_{A_n^c}]
\le c n\,\PP(A_n^c)^{1/2}=o(1).
\]
Consequently,
\[
\E\|Z_1-V_1\|_t\le c n^\al+o(1).
\]
Since $\al<\kap$, this gives
\[
\E[n^{-\kap}\|Z_1-V_1\|_t]\to0,
\]
as required.
\qed

Based on this we can present the following.

\noi{\bf Proof of Lemma \ref{lem21}.}
Consider the processes $\check\Pi_{ij}$, $i,j\in[n]$, an $n\times n$ array of i.i.d.\ rate-$p$ Poisson processes. Apply Lemma \ref{lem14} to this array with $\beta=p$, denoting the corresponding tuple $(Z_i)_{i\in[n]}$ by $(\Pi^*_i)_{i\in[n]}$. Let
\[
\gPi_i(t)=n^{-1/2}(\Pi^*_i(t)-pnt), \qquad i\in[n].
\]
Then the components are i.i.d., and this tuple is independent of $(\hat\Pi_{0i},\hat\Pi_{i0},\hat\Pi^\rs_i)_{i\in[n]}$. Moreover, recalling the definition of $\check\Pi_{ij}$ and denoting by $\check\Pi^\cs_i$ the column-sum $\sum_{j\in[n]}\check\Pi_{ji}$, we have $\hat\Pi^\cs_i(t)=\check\Pi^\cs_i(t)-pnt$. Thus
\[
\gPi_1-\hat\Pi^\cs_1=n^{-1/2}(\Pi^*_1-\check\Pi^\cs_1).
\]
Taking now $\kap=1/2$ in Lemma \ref{lem14} shows $\E[\|\gPi_1-\hat\Pi^\cs_1\|_t]\to0$.
\qed

\subsection{Proof of Theorem \ref{th1}}
\label{sec33}

%\noi{\bf Proof of Theorem \ref{th1}.}
Going back to equations \eqref{x39}-\eqref{x42}, our first goal is to couple $\{\hat Y^n_i\}_{i\in[n]}$, with i.i.d.\ $(\hat\gamma,2)$-BMs $(\tilde Y_i)_{i\in\N}$, in such a way that, for each $n$, $\hat Y_i-\tilde Y_i$ are exchangeable and $\hat Y^n_1\to\tilde Y_1$ in probability as $n\to\iy$. To this end, rewrite $(\hat Y_i)_{i\in[n]}$, as
\begin{align}\label{x51}
\hat Y_i&=\gW_i+\hat\gam_n\iota+\hat E_i,
\\ \notag
\gW_i&:=\hat\Pi_{0i}-\hat\Pi_{i0}-\hat\Pi^\rs_i+\gPi_i,
\\ \notag
\hat E_i&:=\hat\Pi^\cs_i-\gPi_i+\hat M_i.
\end{align}
By Lemma \ref{lem21}, the replacement of $\hat\Pi_i^\cs$ by $\gPi_i$ makes the processes $\gW^i$ i.i.d.\ martingales, while the error is absorbed into $\hat E_i$.
The quadratic variation of $\gW_1$ is
\[
[\gW_1]=n^{-1}(\check\Pi_{01}+\check\Pi_{10}+\check\Pi^\rs_1+\Pi^*_1)
\]
where $\check\Pi^\rs_i=\sum_j\check\Pi_{ij}$. The expression in parentheses is a Poisson process of rate $\gam_nn+qn+2pn$. Since $\gamma_n\to q$, we have that $[\gW_1]\to2\iota$ in probability, and thus $\gW_1$ converges in distribution to a $(0,2)$-BM.
Moreover, by Lemmas \ref{lem20} and \ref{lem21}, the processes $\hat E_i$ are exchangeable and, in view of Lemmas \ref{lem20} and \ref{lem21},
\begin{equation}\label{v1}
\E[\|\hat E_1\|_t]\to0.
\end{equation}

We now invoke Skorohod's representation to realize the prelimit and limit processes on a common space. With a slight abuse of notation, we continue to denote the prelimit processes by the same symbols as before.
The construction uses the one-dimensional convergence
$\gW_1^n \Rightarrow \tilde W_1$ together with the estimate $\hat E_1^n \to 0$ to obtain a coupling of the full triangular array $(\gW_i^n,\tilde W_i,\hat E_i^n)_{i\in[n]}$, in such
a way that the joint law of $(\gW_i^n,\hat E_i^n)_{i\in[n]}$ is preserved.

In particular, since $D^{(1)}$ is a Polish space, $\gW_1$ can be coupled with a $(0,2)$-BM $\tilde W_1$ in such a way that $\gW_1\to\tilde W_1$ in probability. Let $K_n(w,dy)$ be a transition kernel on $(D^{(1)},\calD^{(1)})$ such that $K_n(w,\cdot)$ is a version of the regular conditional law $\calL(\gW_1|\tilde W_1=w)$ for $\calL(\tilde W_1)$-a.e.\ $w$ \cite[Section 10.4]{bogachev2007}.
Let $G_n(\bar w,d\bar e)$ be a transition kernel on $(D^{(n)},\calD^{(n)})$ such that $G_n(\bar w,\cdot)$ is a version of the regular conditional law $\calL(\{\hat E_i\}_{i\in[n]}|\{\gW_i\}_{i\in[n]}=\bar w)$ for $\calL(\{\gW_i\})$-a.e.\ $\bar w$.

Now construct on one probability space the following objects. Let $(\tilde W_i)_{i\in\N}$ be i.i.d.\ $(0,2)$-BMs. Construct $\{\gW^n_i\}_{i\in[n]}$ so that, conditionally on $\{\tilde W_j\}_{j\in\N}$, they are independent with
\[
\calL(\gW^n_i|\{\tilde W_j\})=\calL(\gW^n_i|\tilde W_i)=K_n(\tilde W_i,\cdot), \qquad i\in[n].
\]
This makes $\{\gW^n_i\}_{i\in[n]}$ independent and preserves their original law.
Since $\gW_1^n\to\tilde W_1$ in probability under this coupling, and since
$\sup_n \E\|\gW_1^n\|_t^2<\infty$ and $\E\|\tilde W_1\|_t^2<\infty$,
the family $\{\|\gW_1^n-\tilde W_1\|_t\}_n$ is uniformly integrable. Hence
\begin{equation}\label{v2}
\E\|\gW_1^n-\tilde W_1\|_t\to0.
\end{equation}

Next, construct $\{\hat E^n_i\}_{i\in[n]}$ so that their conditional law given $\{\gW^n_i\}_{i\in[n]}$ is $G_n(\{\gW^n_i\}_{i\in[n]},\cdot)$. This preserves the joint law of $\{\gW^n_i,\hat E^n_i\}_{i\in[n]}$.
Finally, construct $\{\hat X^n_{i0}\}_{i\in[n]}$ according to their original law, independently of all $\tilde W_i$, $\gW^n_i$ and $\hat E^n_i$.

In agreement with \eqref{x51}, on the new probability space, set $\hat Y^n_i=\gW^n_i+\hat\gam_n\iota+\hat E^n_i$. Recalling relation \eqref{x42} and the uniqueness statement of Lemma \ref{lem22}, apply that lemma with $\beta=-p$ and $w_i=\hat X^n_{0i}+\hat Y^n_i$ to define $(\hat X^n,\hat L^n)$ as the unique solution to the system of equations
\[
(\hat X^n_i,\hat L^n_i)=\Gamma(\hat X^n_{0i}+\hat Y^n_i-p\lan\hat L^n\ran),
\qquad i\in[n].
\]
This preserves the original law of $(\hat X^n,\hat L^n)$.

The next step is to construct the Brownian system and estimate the distance between the two systems. Let
\[
\tilde Y_i=\tilde W_i+\hat\gamma\iota,
\qquad i\in[n],
\]
and let $(\tilde X^n,\tilde L^n)$ be the unique (again by Lemma \ref{lem22}) tuple satisfying
\[
(\tilde X^n_i,\tilde L^n_i)=\Gamma(\hat X^n_{0i}+\tilde Y_i-p\lan\tilde L^n\ran),\qquad i\in[n].
\]
We now use the $1$-Lipschitz property of $\Gamma_2$ stated in \eqref{x52}. Namely, for any $t>0$,
\begin{align}\label{x53}
\notag
\|\hat L^n_i-\tilde L^n_i\|_t &\le\| (\hat X^n_{0i}+\hat Y^n_i-p\lan \hat L^n\ran)-(\hat X^n_{0i}+\tilde Y_i-p\lan \tilde L^n\ran)\|_t
\\
&\le\|\gW^n_i-\tilde W_i\|_t+|\hat\gam_n-\hat\gam|t+\|\hat E^n_i\|_t+p\lan\|\hat L^n-\tilde L^n\|_t\ran.
\end{align}
Averaging,
\[
\lan\|\hat L^n-\tilde L^n\|_t\ran\le q^{-1}(\lan\|\gW^n-\tilde W\|_t\ran+|\hat\gam_n-\hat\gam|t+\lan\|\hat E^n\|_t\ran).
\]
Substituting back in \eqref{x53},
\[
\|\hat L^n_i-\tilde L^n_i\|_t\le \|\gW^n_i-\tilde W_i\|_t+(1+pq^{-1})|\hat\gam_n-\hat\gam|t+\|\hat E^n_i\|_t+pq^{-1}(\lan\|\gW^n-\tilde W\|_t\ran+\lan\|\hat E^n\|_t\ran).
\]
By \eqref{v1} and \eqref{v2}, the expression on the right converges to zero in $L^1$.
Hence $\E[\|\hat L^n_1-\tilde L^n_1\|_t]\to0$. Using
\[
\hat X^n_i-\tilde X^n_i=(\hat X^n_{0i}+\hat Y^n_i-p\lan \hat L^n\ran+\hat L^n_i)-(\hat X^n_{0i}+\tilde Y_i-p\lan \tilde L^n\ran+\tilde L^n_i),
\]
and once again \eqref{x53}, shows that $\E[\|\hat X^n_1-\tilde X^n_1\|_t]\to0$.

The convergence of a $k$-tuple to $k$ independent copies of $(\oo X,\oo L)$ now follows immediately from that of the Brownian system, stated in Theorem \ref{th2}.

The convergence of the empirical measures also follows now from Theorem \ref{th2}, but some further details are required. Let $\hat Z^n_i=(\hat X^n_i,\hat L^n_i)$ and $\tilde Z^n_i=(\tilde X^n_i,\tilde L^n_i)$. Recall that we have $\tilde\mu^n_t\to\mu_t$ in $C(\R_+,\calP(\R^2))$ in probability.
The pairs $(\hat Z_i^n,\tilde Z_i^n)_{i\in[n]}$ are exchangeable and one has $\|\hat Z^n_1-\tilde Z^n_1\|_t\to0$ in probability.
Fix $t>0$ and $\eps>0$. By exchangeability,
\[
\frac1n\E\sum_{i\in[n]} 1_{\{\|\hat Z_i^n-\tilde Z_i^n\|_t>\eps\}}
=
\PP(\|\hat Z_1^n-\tilde Z_1^n\|_t>\eps)\to 0.
\]
Hence
$\frac1n\sum_{i\in[n]} 1_{\{\|\hat Z_i^n-\tilde Z_i^n\|_t>\eps\}}\to0$ in probability.
Now take a $1$-Lipschitz, $1$-bounded function $f:\R^2\to\R$. Then for $s\le t$,
\[
\left|\int fd \mu^n_s-\int fd\tilde \mu^n_s\right|
\le
\frac1n\sum_{i\in[n]} |f(\hat Z_i^n(s))-f(\tilde Z_i^n(s))|
\le
\eps+\frac2n\sum_{i\in[n]} 1_{\{\|\hat Z_i^n-\tilde Z_i^n\|_t>\eps\}}.
\]
Noting that the right side does not depend on $s$,
and taking $n\to\iy$ followed by $\eps\to0$, shows that the bounded-Lipschitz distance satisfies
\[
\sup_{s\le t}d_{\mathrm{BL}}(\mu^n_s,\tilde\mu^n_s)\to0 \text{ in probability}.
\]
The result now follows by the triangle inequality.

As for the convergence of $\la^n$, we have $\|\la^n-\tilde\la^n\|_t=\|\lan\hat L^n-\tilde L^n\ran\|_t\le\lan\|\hat L^n-\tilde L^n\|_t\ran\to0$ in probability, hence again the result follows from Theorem \ref{th2}.
\qed

\subsection*{Acknowledgment} Research was supported by ISF grant 3240/25.

%\footnotesize

%%\bibliographystyle{plain}
%%\bibliographystyle{annotate}
%%\bibliographystyle{apalike}
\bibliographystyle{is-abbrv}

\bibliography{main}

\vspace{.5em}

\end{document}